\begin{document}
\newcommand{\bea}{\begin{eqnarray}}
\newcommand{\ena}{\end{eqnarray}}
\newcommand{\beas}{\begin{eqnarray*}}
\newcommand{\enas}{\end{eqnarray*}}
\newcommand{\beq}{\begin{equation}}
\newcommand{\enq}{\end{equation}}
\def\qed{\hfill \mbox{\rule{0.5em}{0.5em}}}
\newcommand{\bbox}{\hfill $\Box$}
\newcommand{\ignore}[1]{}
\newcommand{\ignorex}[1]{#1}
\newcommand{\wtilde}[1]{\widetilde{#1}}
\newcommand{\mq}[1]{\mbox{#1}\quad}
\newcommand{\bs}[1]{\boldsymbol{#1}}
\newcommand{\qmq}[1]{\quad\mbox{#1}\quad}
\newcommand{\qm}[1]{\quad\mbox{#1}}
\newcommand{\nn}{\nonumber}
\newcommand{\Bvert}{\left\vert\vphantom{\frac{1}{1}}\right.}
\newcommand{\To}{\rightarrow}
\newcommand{\supp}{\mbox{supp}}
\newcommand{\law}{{\cal L}}
\newcommand{\Z}{\mathbb{Z}}

\newcommand{\ucolor}[1]{\textcolor{blue}{#1}}  
\newcommand{\ucomm}[1]{\marginpar{\tiny\ucolor{#1}}}  

\newcommand{\icolor}[1]{\textcolor{red}{#1}}  
\newcommand{\icomm}[1]{\marginpar{\tiny\icolor{#1}}}  

\newcommand{\ccolor}[1]{\textcolor{green}{#1}}  
\newcommand{\ccomm}[1]{\marginpar{\tiny\ccolor{#1}}}  

\newtheorem{theorem}{Theorem}[section]
\newtheorem{corollary}{Corollary}[section]
\newtheorem{conjecture}{Conjecture}[section]
\newtheorem{proposition}{Proposition}[section]
\newtheorem{lemma}{Lemma}[section]
\newtheorem{definition}{Definition}[section]
\newtheorem{example}{Example}[section]
\newtheorem{remark}{Remark}[section]
\newtheorem{case}{Case}[section]
\newtheorem{condition}{Condition}[section]
\newcommand{\pf}{\noindent {\it Proof:} }
\newcommand{\proof}{\noindent {\it Proof:} }
\frenchspacing


\title{\bf On Unfair Permutations}

\author{\.{I}lker Arslan\thanks{I\c{s}{\i}k University, Istanbul, Turkey. {\tt Email:\,ilkerarslan@sabanciuniv.edu}.} \hspace{0.2in} \"{U}m\.{i}t I\c{s}lak\thanks{Bo\u{g}azi\c{c}i University, Istanbul, Turkey. {\tt Email:\,umit.islak1@boun.edu.tr}.}  \hspace{0.2in} Cihan Pehlivan\thanks{At{\i}l{\i}m University, Istanbul, Turkey. {\tt Email:\,cpehlivan@ku.edu.tr}.}} \vspace{0.25in}

\maketitle

\begin{abstract} 

In this paper we study the inverse of so-called unfair permutations. Our investigation begins with comparing this class of permutations with uniformly random permutations, and showing that they behave very much alike in case of  locally dependent random variables. As an example of a globally dependent statistic we use the number of inversions, and show that this statistic satisfies a central limit theorem after proper centering and scaling.
\end{abstract}

{\bf Keywords:} Random permutations, uniform permutations, descents, inversions, Stein's method, size biased coupling.

\section{Introduction}\label{sec:intro}

Letting $X_1,\ldots,X_n$ be independent and identically distributed (i.i.d.) random variables each of which is uniformly distributed over $(0,1)$,  and $R_1,\ldots,R_n$ be the corresponding ranks, it is well-known that the distribution of $(R_1,\ldots,R_n)$ is the same as the distribution of a uniformly distributed random permutation in $S_n$, the symmetric group on $n$ distinct letters. This result, often attributed to R\'enyi, of course remains true if we replace the uniform distribution over $(0,1)$ with any other continuous distribution. 

The purpose of this note is to study two related random permutation models; unfair permutations and their inverses. Former of these models was first introduced  in \cite{PSW:2016} by following the ensuing game description.\footnote{Let us note that the definition of an unfair permutation given in \cite{PSW:2016}   is not totally clear to us. At several points they use properties of inverses of unfair permutations as called in this paper, but then, for example, when they deal with the number of descents or the probability of a given permutation, their results are based on unfair permutations - again in the setting of this manuscript.}There are $n$ players labeled $1$ through $n$, and $i^{th}$  player chooses $i$ independent random numbers each of which is uniformly distributed over $(0,1)$, and picks the maximum, say $Z_i$, as her score. Then, the resulting unfair permutation  is $\gamma_n = (\gamma_n(1),\ldots,\gamma_n(n))$, where $\gamma_n(i)$ is the player whose rank is $i$, i.e. who has the $i^{th}$ smallest value. Here, $\gamma_n$ is unfair in the sense that when $i$ is large, $\gamma_n(i)$ favors to have larger values, and vice versa. The motivation of \cite{PSW:2016} for introducing unfair permutations is related to the theory of partitions, see the cited work for a relevant discussion. 

Given the same game description, we  define another random  permutation model by letting $\rho_n=(R_1,R_2,\ldots,R_n)$, where $R_i$ is the  rank of the $i^{th}$ player. Noting that   $\gamma_n(i)$ is the label of the player with  rank $i$ and that  $\gamma_n^{-1}(j)$ is the rank of  player $j$, we see that   $\rho_n=\gamma_n^{-1}$. For this reason, the permutation $\rho_n$ is said to have the   inverse-unfair permutation.

In order to clarify the definitions, let us continue with an example. Assume that there are 4 players, and player $i$, $i= 1,\ldots,4$, picks $i$ random numbers $\{X_j^{(i)}\}_{j=1}^i$  independently each of which is uniform over $(0,1)$, and that the resulting  random numbers turn out to be $\{X_j^{(1)}\}_{j=1}^1 = \{0.75\}$, $\{X_j^{(2)}\}_{j=1}^2 = \{0.15, 0.95\}$, $\{X_j^{(3)}\}_{j=1}^3 = \{0.12, 0.31, 0.72\}$, and $\{X_j^{(4)}\}_{j=1}^4 = \{0.03, 0.27, 0.34, 0.52\}$. Then, recalling that $Z_i = \max_{j =1,\ldots,i} \{X_j^{(i)}\}$, $$Z_1 = 0.75, \quad  Z_2 = 0.95, \quad Z_3 = 0.72, \quad \text{and} \quad Z_4 = 0.52,$$ and so the corresponding rank sequence is   $$R_1 = 3, \quad  R_2 = 4, \quad R_3 = 2, \quad \text{and} \quad R_4 = 1,$$ yielding an unfair permutation $$\gamma_4 = (4, 3, 1, 2).$$ The  corresponding  inverse-unfair permutation is just the rank sequence $$\rho_4 = (3,4,2,1).$$ Here, and below, the permutations are written in one-line notation. For example, $\gamma_4 = (4, 3, 1, 2) = 
  \bigl(\begin{smallmatrix}
    1 & 2 & 3 & 4\\
    4 & 3 & 1 & 2
  \end{smallmatrix}\bigr)$.
   
In next section we will start the discussion by comparing inverse-unfair/unfair permutations to uniform permutations in various ways. In general, we show that statistics of these two random permutation models can behave quite differently when the underlying dependence is global, but that this is not the case when underlying dependence is only local. Also, we will focus on some specific examples of both locally  and globally dependent statistics. Letting $\tau_n$ be a permutation in $S_n$, regarding local dependence, we  will analyze  the number of $m$-descents   when  $m$ is  a fixed natural number 
\begin{equation}\label{defn:descents}
D_{n,m}(\tau_n)=\#\{(i,j): 1\leq j-i\leq m, \tau_n(i)>\tau_n(j) \},
\end{equation}
and regarding globally dependent statistics we will focus on the number of inversions  defined by $$\textnormal{Inv}(\tau_n)  = \sum_{1 \leq i < j \leq n} \mathbf{1}(\tau_n(i) > \tau_n(j)).$$  

 Before moving to the main discussion, let us  fix some notation. First, $=_d$, $\rightarrow_d$ and
$\rightarrow_{\mathbb{P}}$ are used for equality in distribution,
convergence in distribution and convergence in probability,
respectively. $\mathcal{G}$ denotes a standard normal random
variable, and $C$ is used for constants (which may differ in each
line) that do not depend on any of the parameters. The notations  $d_W$ and $d_{TV} $ are reserved for the Wasserstein and total variation distances between probability measures, respectively.  For two
sequences $a_n$ and $b_n$, we write $a_n \sim b_n$  if  $\lim_{n
\rightarrow \infty} a_n / b_n =1$. Finally, $\lfloor \cdot \rfloor$ is used for the floor function.

Rest of the paper is organized as follows. Section \ref{sec:basics} contains some basic probability computations that will be required in following sections. Results of Section \ref{sec:comparison}  compare uniformly random permutations to  unfair permutations, and provides a general connection between the  two for locally dependent random variables. Same section   contains a central limit theorem for two locally dependent statistics. Later, in Section \ref{sec:inversions}, we also consider a globally dependent statistic, the number of inversions, and prove a central limit theorem.  We conclude the paper in Section \ref{sec:conclusion} with a discussion of two generalizations of  unfair permutations. 

\section{Basics}\label{sec:basics}

As in Introduction, consider $n$ players where player $i$ picks $i$ independent random numbers $\{X_j^{(i)}\}_{j=1}^i$ each of which is uniform over $(0,1)$. Set $Z_i = \max\{X_j^{(i)}: j =1,\ldots, i\}$, \footnote{From here on, we use the notation $Z_i$ for maximum of $i$ i.i.d. random numbers without further mention whenever it is clear from the context.} and $R_1,\ldots,R_n$ be the ranks of $Z_1,\ldots,Z_n$, respectively. Define the random permutations $\rho_n$ by setting $\rho_n = (R_1,R_2,\ldots,R_n)$ and $\gamma_n$ by setting $\gamma_n=(\gamma_n(1),\ldots,\gamma_n(n))$, where $\gamma_n(i)$ is the player whose rank is $i.$ 



As a starting point, let us note that there are certain statistics in $S_n$, say $T$ is one such example, so that $T(\rho_n) = T(\gamma_n)$  thanks to some sort of symmetry within $T$. For instance,  recalling that $\textnormal{Inv}(\tau_n)$  is the number of inversions in a permutation $\tau_n$, and noting that  $\textnormal{Inv}(\tau_n) = \textnormal{Inv}(\tau_n^{-1})$ for any $\tau_n$, we have 
\begin{equation}\label{eq:invprop}
\textnormal{Inv}(\rho_n) = \textnormal{Inv}(\gamma_n).
\end{equation}
A generalization to this observation   can be given by  considering the number of increasing (or, decreasing) subsequences  of a permutation $\tau_n$ of  a given length $m$, denoted by $\textnormal{Inc}_{n,m}(\tau_n)$. 
This is so since  if $\tau_n$ has an increasing subsequence indexed by $i_1<i_2<\cdots<i_m$, then $\tau_n^{-1}$ has an increasing subsequence indexed by $\tau_n (i_1) < \tau_n (i_2)<\cdots<\tau (i_m)$. 


Moving on to probabilistic considerations, we will now do some elementary observations  that will be used repeatedly throughout the paper. First,   letting $X_1,\ldots,X_i,Y_1,\ldots,Y_j$ be i.i.d. uniform random variables over $(0,1)$, we observe that 
  \begin{eqnarray}\label{usefulproperty}
 \nonumber \mathbb{P}(\rho_n(i) < \rho_n(j)) 
 &=& \mathbb{P}(\max \{X_1.\ldots,X_i \} < \max \{Y_1,\ldots,Y_j\})  \\ 
&=&   \sum_{\ell=1}^{j}    \mathbb{P}\left(\max\{X_1,\ldots,X_i,Y_1,\ldots,Y_j\} = Y_{\ell} \right) = \sum_{\ell=1}^{j}  \frac{1}{i+j} = \frac{j}{i+j},
\end{eqnarray}
where we use the i.i.d. assumption.
Following  the reasoning in derivation of \eqref{usefulproperty}, for $i_s\in \{1,\dots,n\}$ where $1\leq s\leq k$ and $i_{s_1}\neq i_{s_2}$ for $s_1\neq s_2$, one can also easily show that   \begin{equation}\label{usefulproperty2}\mathbb{P}(\rho_n(i_1)< \cdots < \rho_n(i_k)) = \prod_{\ell = 1 }^k  \left( \frac{i_{\ell}}{\sum_{j = 1}^{\ell} i_j} \right).\end{equation}
In particular, \eqref{usefulproperty2} yields $$\mathbb{P}(\rho_n = id) = \frac{2^n}{(n+1)!}, \quad \quad \text{and} \quad \quad \mathbb{P}(\rho_n = (n,n-1,\ldots,2,1)) = \frac{2^{n}n!}{(2n)!}.$$ Also, since the inverse of $(n,n-1,\ldots,2,1)$ is the same permutation, we have 
 $\mathbb{P}(\gamma_n = (n,n-1,\ldots,2,1)) = \frac{2^{n}n!}{(2n)!}.$
The following list provides the probability mass function of an inverse-unfair permutation in  $S_4$. 
 
\begin{table}[H]
\centering-
\begin{subtable}[t]{1.4in}
\begin{tabular}{ |cc| }
 \hline
$\rho_4$ & Probabilities  \\
 \hline
 $(1234)$ & 0.13333\\
\hline
$(1243)$ & 0.11428 \\
\hline
$(1324)$ & 0.1 \\
\hline
$(1342)$ & 0.06857\\
\hline
$(1423)$ & 0.075   \\
\hline
$(1432)$ & 0.06 \\
\hline
\end{tabular}
\end{subtable}
\quad
\begin{subtable}[t]{1.4in}
\begin{tabular}{|cc|}
\hline
$\rho_4$ & Probabilities  \\
\hline
$(2134)$ & 0.06666\\
\hline
$(2143)$ & 0.05714\\
\hline
$(2314)$ &  0.03333\\
\hline
$(2341)$ & 0.01714\\
\hline
$(2413)$ & 0.02857  \\
\hline
$(2431)$ & 0.015\\
\hline
\end{tabular}
\end{subtable}
\quad
\begin{subtable}[t]{1.4in}
\begin{tabular}{|cc|}
\hline
$\rho_4$ & Probabilities  \\
\hline
$(3124)$ & 0.04 \\
\hline
$(3142)$ & 0.025 \\
\hline
$(3214)$ & 0.02666\\
\hline
$(3241)$ & 0.01428\\
\hline
$(3412)$ & 0.01428\\
\hline
$(3421)$ & 0.01071\\
\hline
\end{tabular}
\end{subtable}
\quad
\begin{subtable}[t]{1.4in}
\begin{tabular}{|cc|}
\hline
$\rho_4$ & Probabilities  \\
\hline
$(4123)$ & 0.02666 \\
\hline
$(4132)$ & 0.02222 \\
\hline
$(4213)$ & 0.01777 \\
\hline
$(4231)$ & 0.01111\\
\hline
$(4312)$ & 0.01269 \\
\hline
$(4321)$ & 0.00952\\
\hline
\end{tabular}
\end{subtable}
\caption{Exact probabilities of all  inverse-unfair permutations in $S_4$.}
\end{table} 
Regarding the unfair case, for any permutation $(a_1,a_2,\ldots,a_n) \in S_n$,  it can be easily shown that \cite{PSW:2016}   
\begin{equation}\label{eq:unfairprobs}
\mathbb{P} (\gamma_n = (a_1,a_2,\ldots,a_n))  = \frac{n!}{\prod_{i=1}^n \sum_{j=1}^i a_j}.
\end{equation}
 The following list,  which looks very much like Table 1, provides all probabilities for unfair permutations in $S_4$. 
\begin{table}[H]
\centering
\begin{subtable}[t]{1.4in}
\begin{tabular}{ |cc| }
 \hline
$\rho_4$ & Probabilities  \\
 \hline
 $(1234)$ & 0.13333\\
\hline
$(1243)$ & 0.11428 \\
\hline
$(1324)$ & 0.1 \\
\hline
$(1342)$ & 0.075\\
\hline
$(1423)$ & 0.06857 \\
\hline
$(1432)$ & 0.06 \\
\hline
\end{tabular}
\end{subtable}
\quad
\begin{subtable}[t]{1.4in}
\begin{tabular}{|cc|}
\hline
$\rho_4$ & Probabilities  \\
\hline
$(2134)$ & 0.06666\\
\hline
$(2143)$ & 0.05714\\
\hline
$(2314)$ & 0.04\\
\hline
$(2341)$ & 0.02666\\
\hline
$(2413)$ & 0.02857\\
\hline
$(2431)$ & 0.02222\\
\hline
\end{tabular}
\end{subtable}
\quad
\begin{subtable}[t]{1.4in}
\begin{tabular}{|cc|}
\hline
$\rho_4$ & Probabilities  \\
\hline
$(3124)$ & 0.03333\\
\hline
$(3142)$ & 0.025\\
\hline
$(3214)$ & 0.02666\\
\hline
$(3241)$ & 0.01777\\
\hline
$(3412)$ & 0.01428\\
\hline
$(3421)$ & 0.01269\\
\hline
\end{tabular}
\end{subtable}
\quad
\begin{subtable}[t]{1.4in}
\begin{tabular}{|cc|}
\hline
$\rho_4$ & Probabilities  \\
\hline
$(4123)$ & 0.01714\\
\hline
$(4132)$ & 0.015\\
\hline
$(4213)$ & 0.01428\\
\hline
$(4231)$ & 0.01111\\
\hline
$(4312)$ & 0.01071\\
\hline
$(4321)$ & 0.00952\\
\hline
\end{tabular}
\end{subtable}
\caption{Exact probabilities of all unfair permutations in $S_4$.}
\end{table}
An induction argument can be used to show that the maximum of $\mathbb{P}(\rho_n = (a_1,a_2,\ldots,a_n))$ is attained at the identity permutation $\rho_n = id = (1,2,\ldots,n)$,  and the minimum is attained at $\rho_n = (n,n-1,\ldots,1)$. The same result holds for inverse-unfair permutations as well.

\section{Comparison to uniform permutations}\label{sec:comparison}

The purpose of this section is to  discuss  similarities and differences between uniform and unfair/inverse-unfair permutations.  
First, 
 we would like to see how far  are unfair permutations from the uniform ones. For this purpose,   recall that the total variation distance between two probability measures $\mu$ and $\nu$ is defined by $d_{TV}(\mu,\nu) = \sup_{A \subset \mathbb{R}} |\mu (A) - \nu(A) |.$ When the sample space $\mathcal{S}$ of  $\mu$ and $\nu$ is discrete, it is well known \cite{LPW:2009} that we may write $d_{TV}(\mu,\nu) = \frac{1}{2} \sum_{x \in \mathcal{S}} |\mu(x) - \nu(x)|.
$

\begin{theorem}\label{thm:TVgoestoone} 
 Let $\rho_n$ and $\pi_n$ be random permutations in $S_n$ with  inverse-unfair and uniform distributions, respectively. Then $$\lim_{n \rightarrow \infty} d_{TV}(\mathcal{L}(\rho_n),\mathcal{L}(\pi_n)) = 1,$$where $\mathcal{L}(\rho_n)$ and $\mathcal{L}(\pi_n)$ are the laws of $\rho_n$ and $\pi_n$.  Same result holds if we replace $\rho_n$ by an unfair permutation $\gamma_n$.

 \begin{proof}
 Define $A_n := \{\tau \in S_n : \tau(1) < \tau(n),\cdots, \tau(\log n) < \tau(n)\}$. Then, evidently, $$\mathbb{P}(\pi_n \in A_n) = \frac{1}{\log n + 1}.$$ (Here and below, $\log n$ is understood to be $\lfloor \log n\rfloor$, where $\lfloor \cdot \rfloor$ is the floor  function.) Next, letting $\{X_i\}_{i \geq 1}$ and $\{Y_i\}_{i \geq 1}$ be sequences of i.i.d. uniformly random variables over $(0,1)$, we have 
 \begin{eqnarray*}
\mathbb{P}(\rho_n \in A_n) &=&  \mathbb{P}\left(\max \left\{X_1,\ldots, X_{\frac{ (\log n) (\log n + 1) }{2} } \right\} < \max \{Y_1, \ldots ,Y_n \} \right) \\  &=&	  \frac{n}{n + \frac{ (\log n) (\log n + 1) }{2}} \longrightarrow 1.\end{eqnarray*} 
 So, we conclude   
\begin{eqnarray*}
 \liminf_{n \rightarrow \infty} d_{TV}(\mathcal{L}(\rho_n),\mathcal{L}(\pi_n)) &\geq& \liminf_{n \rightarrow \infty}\left( \mathbb{P}(\rho_n \in A_n) - \mathbb{P}(\pi_n \in A_n) \right) \\ &=& \lim_{n\rightarrow \infty} \left( \frac{n}{n + \frac{ (\log n) (\log n + 1) }{2}} - \frac{1}{\log n + 1} \right) = 1,
\end{eqnarray*} 
 as $n \rightarrow \infty$, proving the first claim. 

For the second claim, we just observe that  
\begin{eqnarray*}
d_{TV}(\mathcal{L}(\rho_n),\mathcal{L}(\pi_n))&=&\frac{1}{2}\sum\limits_{\sigma\in S_n}|\mathbb{P}\{\rho_n=\sigma\}-\mathbb{P}\{\pi_n=\sigma\}|\\
&=&\frac{1}{2}\sum\limits_{\sigma\in S_n}|\mathbb{P}\{\rho_n=\sigma^{-1}\}-\mathbb{P}\{\pi_n=\sigma^{-1}\}|\\
&=&\frac{1}{2}\sum\limits_{\sigma\in S_n}|\mathbb{P}\{\gamma_n=\sigma\}-\mathbb{P}\{\pi_n=\sigma^{-1}\}|\\
&=&\frac{1}{2}\sum\limits_{\sigma\in S_n}|\mathbb{P}\{\gamma_n=\sigma\}-\mathbb{P}\{\pi_n=\sigma\}|=d_{TV}(\mathcal{L}(\gamma_n),\mathcal{L}(\pi_n)),
\end{eqnarray*}
where we used the fact that $\pi_n$ is uniform. 
\hfill $\square$
 \end{proof}
 \end{theorem}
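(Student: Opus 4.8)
The plan is to exploit the variational characterization $d_{TV}(\mathcal{L}(\rho_n),\mathcal{L}(\pi_n)) = \sup_{A} |\mathbb{P}(\rho_n \in A) - \mathbb{P}(\pi_n \in A)|$ together with the trivial bound $d_{TV} \le 1$. It then suffices to exhibit a single sequence of events $A_n \subseteq S_n$ for which $\mathbb{P}(\rho_n \in A_n) \to 1$ while $\mathbb{P}(\pi_n \in A_n) \to 0$; the difference of these probabilities lower-bounds $d_{TV}$ and forces it to $1$. The event should be designed to detect the ``unfairness,'' namely that player $n$, who takes the maximum of $n$ independent uniforms, is overwhelmingly likely to outrank the low-index players, a feature entirely absent under the uniform law.

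Concretely, I would take $A_n := \{\tau \in S_n : \tau(i) < \tau(n) \text{ for all } 1 \le i \le \lfloor \log n \rfloor\}$, the set of permutations whose last coordinate exceeds each of the first $\lfloor \log n\rfloor$ coordinates. Under $\pi_n$, by exchangeability the position of the maximum among the $\lfloor \log n\rfloor + 1$ relevant coordinates is uniform, so $\mathbb{P}(\pi_n \in A_n) = 1/(\lfloor \log n\rfloor + 1) \to 0$. Under $\rho_n$, the event translates to $\max\{Z_1,\ldots,Z_{\lfloor \log n\rfloor}\} < Z_n$, and since the underlying uniforms are independent across players, this is the probability that the maximum of $\sum_{i=1}^{\lfloor \log n\rfloor} i = \frac{\lfloor \log n\rfloor(\lfloor \log n\rfloor+1)}{2}$ i.i.d.\ uniforms is beaten by the maximum of an independent block of $n$ uniforms. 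Formula \eqref{usefulproperty} evaluates this to $n / (n + \frac{\lfloor \log n\rfloor(\lfloor \log n\rfloor+1)}{2})$, which tends to $1$ because the second term in the denominator is $O((\log n)^2) = o(n)$.

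The only genuine design choice is the growth rate $\lfloor \log n\rfloor$: it must diverge, so that the uniform probability $1/(\lfloor\log n\rfloor+1)$ vanishes, yet grow slowly enough that the total number of uniforms it aggregates, of order $(\log n)^2$, stays $o(n)$, so that the inverse-unfair probability remains near $1$. Any rate between these two constraints works; $\log n$ is a clean choice, and I expect identifying this balance to be the main (albeit modest) obstacle, the remainder being a direct application of \eqref{usefulproperty}.

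For the unfair permutation $\gamma_n$, rather than repeating the argument I would transfer it through the identity $\gamma_n = \rho_n^{-1}$. Writing the distance as $\frac{1}{2}\sum_{\sigma \in S_n} |\mathbb{P}(\rho_n = \sigma) - \mathbb{P}(\pi_n = \sigma)|$ and reindexing the sum by the bijection $\sigma \mapsto \sigma^{-1}$ of $S_n$, each term becomes $|\mathbb{P}(\rho_n = \sigma^{-1}) - \mathbb{P}(\pi_n = \sigma^{-1})| = |\mathbb{P}(\gamma_n = \sigma) - \mathbb{P}(\pi_n = \sigma)|$, where I use $\mathbb{P}(\rho_n = \sigma^{-1}) = \mathbb{P}(\gamma_n = \sigma)$ and the inversion-invariance $\mathbb{P}(\pi_n = \sigma^{-1}) = \mathbb{P}(\pi_n = \sigma)$ of the uniform law. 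Hence $d_{TV}(\mathcal{L}(\gamma_n),\mathcal{L}(\pi_n)) = d_{TV}(\mathcal{L}(\rho_n),\mathcal{L}(\pi_n))$, and the second claim follows from the first at no extra cost.
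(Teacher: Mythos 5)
Your proposal is correct and follows essentially the same route as the paper: the same event $A_n=\{\tau : \tau(i)<\tau(n),\ 1\le i\le \lfloor\log n\rfloor\}$, the same evaluation of the two probabilities via \eqref{usefulproperty}, and the same reindexing by $\sigma\mapsto\sigma^{-1}$ to transfer the result to $\gamma_n$. No gaps.
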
 
  
The next result shows  that the moments of certain locally dependent statistics in uniform and inverse-unfair permutations behave similarly asymptotically.

\begin{theorem}\label{cor:momentequivalance} 
Let $\rho_n$ and $\pi_n$ be random permutations in $S_n$ with inverse-unfair and uniform distributions, respectively. For a given $\tau_n \in S_n$, let $$Y(\tau_n) = \sum_{i=1}^n \chi_i(\tau_n),$$ where $\chi_i(\tau_n)$ is a  function of the form $$\chi_i(\tau_n)  \\=   \mathbf{1}(\tau_n(i - m_{2,n}(i)) \triangle_{-m_{2,n}(i)}\cdots \triangle_{-2} \tau_n(i-1)  \triangle_{-1} \tau_n(i)  \triangle_0 \tau_n(i+1)  \triangle_{1}\cdots  \triangle_{m_{1,n}(i) - 1}\tau_n(i+m_{1,n}(i))),$$  with each $ \triangle_{j} \in \{<, > \}$.  Here, for $i \in \mathbb{N}$, $m_{1,n}(i) = \min\{i-1, r_1(i)\}$ and $m_{2,n}(i) = \min\{i-1,r_2(i)\}$ are  integer valued functions with $1 \leq r_1(i), r_2(i) \leq M $ for some constant $M < \infty$. Then,  for any $k \geq 1$, we have  $$\frac{\mathbb{E}\left[\left( Y(\rho_n)\right)^k \right]}{\mathbb{E}\left[\left(   Y(\pi_n)\right)^k \right]} \longrightarrow 1, \quad \text{as} \; \; n \rightarrow \infty.$$ 
\end{theorem}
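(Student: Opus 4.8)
The plan is to compare the $k$-th moments by expanding both into sums over $k$-tuples of indices and showing that the dominant contributions agree asymptotically while the discrepancies are negligible. Writing $Y(\tau_n)^k = \sum_{i_1,\ldots,i_k} \chi_{i_1}(\tau_n)\cdots\chi_{i_k}(\tau_n)$, I would take expectations of both the inverse-unfair and uniform versions term by term. The key structural observation is that each $\chi_i$ depends only on the relative order of $\tau_n$ on a window of indices $[i-m_{2,n}(i),\, i+m_{1,n}(i)]$ whose width is bounded by $2M+1$, a constant. Thus each product $\chi_{i_1}\cdots\chi_{i_k}$ is a function of the relative order of $\tau_n$ on at most $k(2M+1)$ indices.

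The heart of the argument is a local comparison estimate: for any \emph{fixed} set of indices $S = \{s_1 < \cdots < s_p\}$ with $p$ bounded by a constant, and any target relative ordering, the probability that $\rho_n$ realizes that ordering on $S$ converges to the uniform value $1/p!$ as $\min(S) \to \infty$, with an error that is uniform and quantifiable. This should follow directly from the product formula \eqref{usefulproperty2}: for indices $s_1,\ldots,s_p$ all large (say all exceeding some growing threshold), each factor $i_\ell / \sum_{j\le \ell} i_j$ is close to its uniform counterpart because the indices are comparable in size. Concretely, when the indices lie in a bounded window around a large location $i$, all the $s_\ell$ are within $O(M)$ of each other, so ratios like $s_\ell/(s_1+\cdots+s_\ell)$ approach $1/\ell$, recovering $\prod_\ell 1/\ell = 1/p!$. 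I would isolate this as the main lemma and prove the uniform rate of convergence carefully.

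Next I would partition the index tuples $(i_1,\ldots,i_k)$ into two regimes. In the \textbf{bulk regime}, all $i_r$ exceed a slowly growing threshold such as $\omega(n) \to \infty$ with $\omega(n)/n \to 0$; here the local comparison lemma gives $\mathbb{E}[\chi_{i_1}(\rho_n)\cdots\chi_{i_k}(\rho_n)] = \mathbb{E}[\chi_{i_1}(\pi_n)\cdots\chi_{i_k}(\pi_n)](1+o(1))$ uniformly. In the \textbf{boundary regime}, at least one $i_r$ is below $\omega(n)$; the number of such tuples is $O(\omega(n)\, n^{k-1})$, which is negligible compared to the total count $\Theta(n^k)$ provided we show $\mathbb{E}[Y(\pi_n)^k] = \Theta(n^k)$. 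This lower bound on the uniform moment is needed so that the boundary terms, bounded crudely by their count, contribute only a lower-order fraction; it follows from the fact that $\mathbb{E}[Y(\pi_n)] = \Theta(n)$ together with concentration, or more directly by counting tuples whose windows are mutually far apart so the indicators are independent under $\pi_n$ with probabilities bounded below.

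The main obstacle I anticipate is making the bulk comparison \emph{uniform} over the $\Theta(n^k)$ tuples simultaneously, rather than tuple by tuple. A single tuple's error tends to zero, but summing $\Theta(n^k)$ errors requires that the relative error in each bulk term be $o(1)$ uniformly, and that this uniform $o(1)$ survives the division by $\mathbb{E}[Y(\pi_n)^k]=\Theta(n^k)$. The delicate point is that windows near the threshold $\omega(n)$ have weaker comparison rates, so I must choose $\omega(n)$ growing fast enough that the per-term error is uniformly small on the bulk, yet slowly enough that the boundary count $\omega(n) n^{k-1}$ stays negligible. Balancing these two requirements, and confirming that the product formula \eqref{usefulproperty2} indeed yields an error bound of the form $O(M/\min(S))$ per factor so that the bulk relative error is $O(M\,\omega(n)^{-1}\cdot k)$, is where the real work lies.
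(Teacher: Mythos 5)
Your route is genuinely different from the paper's. The paper never expands moments over index tuples or invokes the product formula \eqref{usefulproperty2}; instead, for each window it conditions on a high-probability event $A_i$ (that each relevant player's maximum falls among her first $i-m_{2,n}(i)$ draws), argues that conditionally the window maxima are exchangeable so that $\mathbb{P}(\chi_i(\rho_n)=1\mid A_i)=\mathbb{P}(\chi_i(\pi_n)=1)$, bounds $\mathbb{P}(A_i^c)\le 1-\left(\frac{i-M}{i+M}\right)^{2M+1}$ so that $\sum_i\mathbb{P}(A_i^c)=O(\log n)$ is negligible against the denominator $\Theta(n)$, and dispatches $k\ge 2$ with one sentence. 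Your direct computation from \eqref{usefulproperty2} buys an explicit per-window relative error $O(M^2/i)$, hence a quantitative rate of order $\log n/n$ for the ratio, and it makes the $k\ge 2$ case concrete rather than asserted; the paper's conditioning is shorter, but its exchangeability step is delicate (conditioning on where an argmax falls does not change the law of the maximum), so your more pedestrian route is arguably the safer one.

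Two points in your write-up need repair. First, your main lemma as stated is false: for $S=\{s_1<\cdots<s_p\}$ the probability of a prescribed relative order does \emph{not} converge to $1/p!$ merely because $\min(S)\to\infty$; take $p=2$, $s_1=\lfloor\sqrt n\rfloor$, $s_2=n$, where $\mathbb{P}(\rho_n(s_1)<\rho_n(s_2))=n/(n+\lfloor\sqrt n\rfloor)\to 1$. The hypothesis you actually use, and should state, is that the diameter of $S$ stays bounded (by $O(kM)$) while $\min(S)\to\infty$; then each factor in \eqref{usefulproperty2} is $\frac{1}{\ell}(1+O(M/\min S))$ and the claim holds with a uniform rate. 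Second, for $k\ge 2$ the index set of $\chi_{i_1}\cdots\chi_{i_k}$ is in general a union of several widely separated clusters, to which the bounded-diameter lemma does not apply as a whole; you need the additional (easy, but necessary) observation that relative orders on disjoint clusters are independent under both $\rho_n$ (they involve disjoint families of the independent $Z_j$'s) and $\pi_n$, so the comparison is made cluster by cluster and multiplied. With these two fixes, together with the lower bound $\mathbb{E}[Y(\pi_n)^k]\ge(\mathbb{E}[Y(\pi_n)])^k\ge c^k n^k$ (each $\mathbb{P}(\chi_i(\pi_n)=1)\ge 1/(2M+1)!$), your bulk/boundary bookkeeping closes the argument.
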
  

Let us demonstrate the use of the theorem with some well known locally dependent statistics in literature: 

\begin{table}[H]
\centering
\label{my-label}
\begin{tabular}{|l|l|l|l|}
\hline
 Statistic & Order of Expectation  & Order of Variance \\ \hline
$D_n(\rho_n)$ &  $n / 2$ & $n / 12$   \\ \hline 
 $M_n(\rho_n)$ & $n / 3$  & $2n / 45 $  \\ \hline
$LA_n(\rho_n)$ & $2n / 3$ & $8n / 45$    \\ \hline
$R_{n,m}(\rho_n)$ & $\frac{n}{m!}$ & $n \left(\frac{1}{m!} \left(1 - \frac{2m -1}{m!} \right)  + 2 \sum_{k=1}^{m-1} \frac{1}{(m+k)!}\right)$   \\ \hline
\end{tabular}
\end{table}
Here the statistics are respectively the number of descents, the number of local maximums, the length of the longest alternating subsequence and the number of rising sequences of length $m$. Asymptotic orders   follow from Theorem \ref{cor:momentequivalance}, and the corresponding asymptotic results for uniform permutations  in \cite{Fulman:2004}, \cite{Romik:2011},  \cite{HR:2010}, and \cite{GG:2011}, respectively.

\emph{Proof of Theorem \ref{cor:momentequivalance}:} Let us first discuss the case $k=1$. Observe that  \begin{equation}\label{eqn:Expectratio1}
\frac{\mathbb{E}\left[Y(\rho_n) \right]}{\mathbb{E}\left[  Y(\pi_n) \right]} = \frac{\sum_{i=1}^n \mathbb{P}(\chi_i(\rho_n) = 1)}{\sum_{i=1}^n \mathbb{P}(\chi_i(\pi_n) = 1)}.
\end{equation} 
Following the notation in Introduction, for $j=1,\ldots,n$, let $\{X_i^{(j)}\}_{i=1}^j$  be the independent $U(0,1)$ random numbers used to form an inverse-unfair permutation. Also set $Z_j = \max \{X_i^{(j)}: i =1,\ldots,j\}$, and  define the events $$B_i^{(j)} = \{Z_j \in \{X_1^{(j)}, \ldots, X_{i - m_{2,n}(i)}^{(j)}\}\},$$ and $$A_i = \bigcap_{j=i-m_{2,n}(i)}^{i+m_{1,n}(i)} B_i^{(j)}.$$ Note that conditional on $A_i$, each of the random variables $Z_j$, $j=i - m_{2,n}(i),\ldots,i+m_{1,n}(i)$, can be considered the as the maximum of $i - m_{2,n}(i)$  i.i.d. $U(0,1)$ random variables so that $Z_{i-m_{2,n}(i)}, \ldots, Z_{i+m_{1,n}}(i)$ are i.i.d. as well. Since  $\{\chi_{i}(\rho_n) = 1\}$ is an event related to the relative ordering of the $Z_j$'s the conditional probability given that $A_i$ occurs should be the same as that of $\{\chi_i(\pi_n) = 1\}$.

Then $$\mathbb{P}(B_i^{(j)})  \geq \frac{i-m_{2,n}(i)}{j}, \; \text{for each} \; j=i-m_{2,n}(i),\ldots,i+m_{1,n}(i),$$ and so using independence$$\mathbb{P}(A_i) = \mathbb{P}\left( \bigcap_{j=i-m_{2,n}(i)}^{i+m_{1,n}(i)} B_i^{(j)} \right) \geq \prod_{j=i-m_{2,n}(i)}^{i+m_{1,n}(i)} \frac{i-m_{2,n}(i)}{j} \geq \left( \frac{i -M}{i+M}\right)^{m_{1,n}(i) + m_{2,n}(i) +1} \geq \left( \frac{i -M}{i+M}\right)^{2M + 1}.$$ This also yields $$\mathbb{P}(A_i^c) \leq 1 - \left( \frac{i -M}{i+M}\right)^{2M + 1}.$$ 
Going back to \eqref{eqn:Expectratio1}, we rewrite the right-hand as $$\frac{\sum_{i=1}^n \mathbb{P}(\chi_i(\rho_n) = 1)}{\sum_{i=1}^n \mathbb{P}(\chi_i(\pi_n) = 1)} = \frac{\sum_{i=1}^n \mathbb{P}(\chi_i(\rho_n) = 1 \big| A_i) \mathbb{P}(A_i)}{\sum_{i=1}^n \mathbb{P}(\chi_i(\pi_n) = 1)} +  \frac{\sum_{i=1}^n \mathbb{P}(\chi_i(\rho_n) = 1 \big| A_i^c) \mathbb{P}(A_i^c)}{\sum_{i=1}^n \mathbb{P}(\chi_i(\pi_n) = 1)}.$$ We will next prove
\begin{itemize}
\item[i.] $\frac{\sum_{i=1}^n \mathbb{P}(\chi_i(\rho_n) = 1 \big| A_i) \mathbb{P}(A_i)}{\sum_{i=1}^n \mathbb{P}(\chi_i(\pi_n) = 1)} \longrightarrow 1$, as $n\rightarrow \infty$,
\item[ii.] $\frac{\sum_{i=1}^n \mathbb{P}(\chi_i(\rho_n) = 1 \big| A_i^c) \mathbb{P}(A_i^c)}{\sum_{i=1}^n \mathbb{P}(\chi_i(\pi_n) = 1)} \longrightarrow 0$, as $n\rightarrow \infty$.
\end{itemize}

\emph{Proof of i.} Let $\epsilon > 0$. Observe that we have $$\left(\frac{i-M}{i+M} \right)^{2M+1} > 1 - \frac{\epsilon}{2} \Longleftrightarrow i >  \frac{2M}{ 1 - \exp \left(\frac{\log (1 - \epsilon /2)}{2M+1} \right)} -M \geq M,$$ where the last inequality follows since $\log (1 - \epsilon /2) < 0$ and so $\exp \left(\frac{\log (1 - \epsilon /2) }{2M + 1} \right)$. Now, letting $M^* = \max \left\{M, \left(\frac{2M}{1 - \exp\left(\frac{\log (1-\epsilon/2)}{2 M +1} \right)} \right) - M \right\} =  \frac{2M}{1 - \exp\left(\frac{\log (1-\epsilon/2)}{2 M +1} \right)} - M$, we write  
$$\frac{\sum_{i=1}^n \mathbb{P}(\chi_i(\rho_n) = 1 \big| A_i) \mathbb{P}(A_i)}{\sum_{i=1}^n \mathbb{P}(\chi_i(\pi_n) = 1)}  = \frac{\sum_{i=1}^{M^*} \mathbb{P}(\chi_i(\rho_n) = 1 \big| A_i) \mathbb{P}(A_i)}{\sum_{i=1}^n \mathbb{P}(\chi_i(\pi_n) = 1)}  + \frac{\sum_{i=M^*+1}^n \mathbb{P}(\chi_i(\rho_n) = 1 \big| A_i) \mathbb{P}(A_i)}{\sum_{i=1}^n \mathbb{P}(\chi_i(\pi_n) = 1)}. $$  First term on the left-hand side clearly converges to zero. Also it can be checked easily that the second term satisfies $$\lim_{n \rightarrow \infty}  \frac{\sum_{i=M^*+1}^n \mathbb{P}(\chi_i(\rho_n) = 1 \big| A_i) \mathbb{P}(A_i)}{\sum_{i=1}^n \mathbb{P}(\chi_i(\pi_n) = 1)} \geq 1- \epsilon,$$ proving claim i..

\emph{Proof of ii.} We have $$\frac{\sum_{i=1}^n \mathbb{P}(\chi_i(\rho_n) = 1 \big| A_i^c) \mathbb{P}(A_i^c)}{\sum_{i=1}^n \mathbb{P}(\chi_i(\pi_n) = 1)} \leq \frac{\sum_{i=1}^n  \mathbb{P}(A_i^c)}{\sum_{i=1}^n \mathbb{P}(\chi_i(\pi_n) = 1)} \leq  \frac{\sum_{i=1}^n  \left( 1 - \left(1 - \frac{2M}{i+M} \right)^{2M+1}\right)}{\sum_{i=1}^n \mathbb{P}(\chi_i(\pi_n) = 1)}.$$
 Now observe that for $i > M$, Bernoulli's inequality yields $$\left(1 - \frac{2M}{i +M } \right)^{2M + 1} \geq 1 - (2 M + 1) \frac{2M }{ i + M}.$$ Therefore, when $i > M$, we obtain  $$1 - \left(1 - \frac{2M}{i +M } \right)^{2M + 1} \leq  1 - \left( 1 - (2 M + 1) \frac{2M }{ i + M}  \right) =  \frac{(2M + 1 ) 2M}{i +M},$$ and since $M$ is a constant independent of $n$, this gives  
 $$\sum_{i=1}^n  \left( 1 - \left(1 - \frac{2M}{i+M} \right)^{2M+1}\right) \leq C \log n.$$
 Noting that $\liminf_{n \rightarrow \infty } \frac{1}{n} \sum_{i=1}^n \mathbb{P}(\chi_i(\pi_n) = 1) \geq c$, for some $c \in \mathbb{R}^+$, we conclude that $$\frac{\sum_{i=1}^n \mathbb{P}(\chi_i(\rho_n) = 1 \big| A_i^c) \mathbb{P}(A_i^c)}{\sum_{i=1}^n \mathbb{P}(\chi_i(\pi_n) = 1)}  \longrightarrow 0, $$ showing that claim ii is true.  Combining all above, result follows for $k=1$.  

For more general case, note that $\left( Y(\rho_n)\right)^k$ can still be considered as a sum of indicator random variables that are locally dependent, and proof follows in a similar way.\hfill $\square$

\bigskip

We   conclude this section with a more detailed  analysis of a  locally dependent statistic, the number of generalized descents. Expectation of a   special  case of this statistic, the number of descents, in unfair permutations was previously studied  in \cite{PSW:2016}, where they find its expectation and asymptotic growth. However, our treatment below   simplifies their computations significantly.  The theory for $D_{n,m}$ in  case of uniform permutations is well-established, see \cite{Fulman:2004} and \cite{Pike:2011} for relevant work.

For a given permutation $\tau_n \in S_n$, and for  $m\geq 1$, recall that  the number of $m$-descents in $\tau_n$ is defined by 
$D_{n,m}=\#\{(i,j): 1\leq j-i\leq m, \tau_n(i)>\tau_n(j) \}.$ When $m=1$, $D_{n,1}$ is known to be the number of descents in $\gamma_n$, and we simply write $D_n$ for $D_{n,1}$. In a similar way, the number of $m$-ascents in $\tau_n$ is defined   by
$A_{n,m}=\#\{(i,j): 1\leq j-i\leq m, \tau_n(i)< \tau_n(j) \},$ and $A_n := A_{n,1}$  is said to be the number of ascents in $\tau_n$.   
\begin{theorem}\label{thm:m-descents} 
Let $\rho_n$ be an inverse-unfair permutation in $S_n$ and $D_{n,m}$ be the number of $m$-descents in $\rho_n$. Then
\begin{equation*}\label{eqn:meandescents}
\mathbb{E}[D_{n,m}] = \frac{nm}{2}-\frac{m(m+1)}{4}-\sum_{k=1}^{m}\sum_{i=1}^{n-k} \frac{k}{2(2i+k)}, \quad \text{and} \quad \textnormal{Var}(D_{n,m}) \sim \frac{6n m + 4 m^3 + 3m^2 - m}{72}. 
\end{equation*} 
Further, \begin{equation*}\label{eqn:CLTlocal}
\frac{D_{n,m} - \mathbb{E}[D_{n,m}]}{\sqrt{\textnormal{Var}(D_{n,m})}} \longrightarrow_d \mathcal{G}, \qquad n \rightarrow \infty.
\end{equation*}
\end{theorem}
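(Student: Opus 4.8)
The plan is to exploit the fact that the inverse-unfair permutation is built from the \emph{independent} scores $Z_1,\ldots,Z_n$ and that $\rho_n(i)>\rho_n(j)$ holds precisely when $Z_i>Z_j$. Writing $\xi_{i,k}=\mathbf{1}(Z_i>Z_{i+k})$, one has
\[
D_{n,m}=\sum_{k=1}^m\sum_{i=1}^{n-k}\xi_{i,k},
\]
and each $\xi_{i,k}$ is a function only of the pair $(Z_i,Z_{i+k})$. Because the $Z_j$'s are independent, $\xi_{i,k}$ and $\xi_{i',k'}$ are independent whenever $\{i,i+k\}\cap\{i',i'+k'\}=\varnothing$; since $k,k'\le m$, this makes $\{\xi_{i,k}\}$ a family with a bounded range of dependence, which is exactly the structure needed for all three assertions.

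For the expectation I would simply invoke \eqref{usefulproperty}, which gives $\mathbb{P}(\xi_{i,k}=1)=\mathbb{P}(Z_i>Z_{i+k})=\frac{i}{2i+k}$. Using the identity $\frac{i}{2i+k}=\frac12-\frac{k}{2(2i+k)}$ and summing over $1\le i\le n-k$, $1\le k\le m$, the constant halves produce $\frac12\sum_{k=1}^m(n-k)=\frac{nm}{2}-\frac{m(m+1)}{4}$, while the remaining pieces assemble into $-\sum_{k=1}^m\sum_{i=1}^{n-k}\frac{k}{2(2i+k)}$, giving the stated formula. This step is routine.

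The variance is where the real work lies. Expanding $\textnormal{Var}(D_{n,m})=\sum \textnormal{Cov}(\xi_{i,k},\xi_{i',k'})$ and discarding the (independent, hence uncorrelated) pairs with disjoint index windows leaves only $O(n)$ surviving covariances, each determined by the relative order of at most $2m+1$ consecutive scores. I would classify these by the \emph{shape} of the overlap (shared left endpoint, shared right endpoint, or one pair's endpoint meeting the other's), and for each shape express the covariance through ordering probabilities of a bounded set of independent $Z$'s, computable exactly as in \eqref{usefulproperty2}. The key analytic input is that as the indices grow these ordering probabilities converge to their i.i.d.-continuous values: concretely $j(1-Z_j)\Rightarrow\mathrm{Exp}(1)$, so within a window of bounded width the relative ranks of $(Z_i,\ldots,Z_{i+m})$ become asymptotically uniform. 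Consequently each local covariance converges to its uniform-permutation analogue, and since there are $\sim c\,n$ terms of each shape (plus $O(1)$ boundary terms), one obtains $\textnormal{Var}(D_{n,m}(\rho_n))=\textnormal{Var}(D_{n,m}(\pi_n))+o(n)$; matching against the uniform $m$-descent variance then yields $\sim\frac{6nm+4m^3+3m^2-m}{72}$. Note the diagonal alone contributes $\sim\frac{mn}{4}$ (each $\textnormal{Var}(\xi_{i,k})\to\frac14$), so the point of the bookkeeping is to verify that the negative nearest-neighbor covariances trim this down to the asserted leading coefficient $\frac{m}{12}$.

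Finally, for the central limit theorem I would appeal to a normal approximation bound for sums of locally dependent random variables proved via Stein's method, namely the dependency-neighborhood Wasserstein bound. Taking the centered summands $\bar\xi_{i,k}=\xi_{i,k}-\mathbb{E}\xi_{i,k}$, with dependency neighborhoods of size bounded uniformly in $n$ (a function of $m$ only), each summand bounded by $1$, and $\sigma^2=\textnormal{Var}(D_{n,m})\asymp n$ from the previous step, the standard bound controls $d_W\!\left(\frac{D_{n,m}-\mathbb{E}D_{n,m}}{\sigma},\mathcal{G}\right)$ by a quantity of order $\frac{n}{\sigma^3}+\frac{\sqrt n}{\sigma^2}\asymp n^{-1/2}\to 0$, giving the claimed convergence in distribution. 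The main obstacle is the variance computation: Theorem \ref{cor:momentequivalance} delivers equivalence of raw moments but not of variances, since the leading $n^2$ contributions cancel, so the asymptotic constant must be extracted from a direct, if careful, covariance count rather than quoted from the moment-equivalence result.
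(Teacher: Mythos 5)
Your proposal is correct, and the expectation computation and the appeal to a local-dependence CLT coincide with what the paper does (the paper cites the $m$-dependent/Chen--Shao normal approximation where you invoke the Stein dependency-neighborhood bound; these are interchangeable here and both give an $O(n^{-1/2})$ rate). Where you genuinely diverge is the variance. The paper declares the asymptotics of $\textnormal{Var}(D_{n,m})$ to be ``an immediate consequence'' of Theorem \ref{cor:momentequivalance} together with Pike's uniform-permutation results, whereas you carry out a direct covariance count: only $O(n)$ pairs with overlapping index windows survive, each surviving covariance is an ordering probability of a bounded window of independent $Z$'s computable from \eqref{usefulproperty2}, and these probabilities converge to their uniform (i.i.d.) values as the window location tends to infinity, so a Ces\`aro argument gives $\textnormal{Var}(D_{n,m}(\rho_n))=\textnormal{Var}(D_{n,m}(\pi_n))+o(n)\sim\frac{nm}{12}$. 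Your stated reason for not quoting Theorem \ref{cor:momentequivalance} is well taken and in fact identifies a real gap in the paper's own argument: that theorem gives $\mathbb{E}[Y(\rho_n)^2]=\mathbb{E}[Y(\pi_n)^2](1+o(1))$ with $\mathbb{E}[Y(\pi_n)^2]\asymp n^2$, so the unquantified $o(1)$ permits an $o(n^2)$ error that swamps the $\Theta(n)$ variance after the means (themselves agreeing only to $o(n^2)$) are subtracted. Your direct computation is essentially the paper's own Remark following the theorem (which does the $m=1$ case by hand), extended to general $m$, and it is the more defensible route; the only cost is the bookkeeping over overlap shapes, which is finite for fixed $m$ and which you correctly reduce to verifying that the negative nearest-neighbor covariances cut the diagonal contribution $\frac{nm}{4}$ down to $\frac{nm}{12}$.
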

 
\begin{proof}
Noting that 
$$\mathbb{E}[D_{n,m}]=\sum_{k=1}^{m} \sum_{i=1}^{n-k}  \mathbb{P}(\rho_n(i) > \rho_n(i+k)),$$
and that 
$
\mathbb{P}(\rho_n(i)>\rho_n(i+k)) =\frac{i}{2i+k},$
we have 
\begin{align*}
\mathbb{E}[D_{n,m}]=\sum_{k=1}^{m} \sum_{i=1}^{n-k} \frac{i}{2i+k}=\sum_{k=1}^{m}\left[ \frac{n-k}{2}- \frac{1}{2}\sum_{i=1}^{n-k} \frac{k}{2i+k}\right]&=\frac{nm}{2}-\frac{m(m+1)}{4}-\sum_{k=1}^{m}\sum_{i=1}^{n-k} \frac{k}{2(2i+k)}.
\end{align*}
The asymptotics of $Var(D_{n,m})$ is an immediate consequence of Theorem \ref{cor:momentequivalance}, and results of \cite{Pike:2011} on the number of generalized descents in uniformly random permutations.  The  central limit theorem   follows from well-known asymptotic results on $m$-dependent sequences. Indeed, it is standard that besides the central limit theorem, one may obtain a convergence rate of order $1 / \sqrt{n}$ with respect to Kolmogorov distance \cite{CS}. 
\hfill $\square$
\end{proof}

It is easy to see that choosing $m=1$ in Theorem \ref{thm:m-descents} yields $\mathbb{E}[D_n]=\frac{n}{2}-\frac{\log n}{4}+O(1).$ Noting that $A_n+D_n=n-1$ we obtain $\mathbb{E}[A_n]=\frac{n}{2}+\frac{3\log n}{4}+O(1).$

\begin{remark} 
Following steps similar to the computation of $\mathbb{E}[D_{n,m}]$, we may actually compute exact values of higher moments of $D_{n,m}$. As an example, let us compute  $\textnormal{Var}(D_n)$.  Letting $U_i=\mathbf{1}(\rho_n(i)>\rho_n(i+1))$,   we have $\textnormal{Var}(U_i)=\frac{i(i+1)}{(2i+1)^2}$ since $\mathbb{E}[U_i]=\mathbb{P}(\rho_n(i)>\rho_n(i+1))=\frac{i}{2i+1}.$
When $i+1<j$, we have $\textnormal{Cov}(U_i,U_j)=0$ because $U_i$ and $U_j$ are independent. Also if $i+1=j$, then 
\begin{align*}
\mathbb{E}[U_iU_j]&=\mathbb{E}[U_iU_{i+1}]=\mathbb{P}(\rho_n(i)>\rho_n(i+1)>\rho_n(i+2))=\frac{i}{6i+9},
\end{align*}
and so
$$\textnormal{Cov}(U_i,U_{i+1})=\frac{i}{6i+9}-\frac{i}{2i+1}\frac{i+1}{2i+3}=\frac{-i(i+2)}{3(2i+3)(2i+1)}.$$
Hence
$$
\textnormal{Var}(D_n)=\sum_{i=1}^{n-1}\frac{i(i+1)}{(2i+1)^2}\ - \frac{2}{3}\sum_{i=1}^{n-1}\frac{i(i+2)}{(2i+3)(2i+1)}
= \frac{n}{12} + O(1). 
$$ 
\end{remark}
  
\section{Number of  inversions}\label{sec:inversions}
The number of inversions in a permutation $\tau \in S_n$ is defined by  $$\textnormal{Inv}(\tau) = \sum_{1 \leq i < j \leq n}  \mathbf{1} (\tau(i) > \tau(j)). $$  This is the number of pairs $(i,j)$ whose corresponding values are out of order.  The number of anti-inversions is defined in a similar way by setting $$\textnormal{AInv}(\tau) = \sum_{i < j}  \mathbf{1} (\tau(i) < \tau(j)).$$ Asymptotic properties of the number of inversions when $\tau$ is a uniformly random permutation are well studied. See   \cite{Fulman:2004} and \cite{Pike:2011}.

The number of anti-inversions in an unfair permutation $\gamma_n$ in $S_n$ was previously studied in \cite{PSW:2016}, where they proved that $$\mathbb{E}[\textnormal{AInv}(\gamma_n)] = \left(\frac{\log 2 }{2}\right) n^2 + O(n), \; \;  \text{and} \; \; \textnormal{Var}(\textnormal{AInv}(\gamma_n)) = \left(\frac{1}{3}  - \frac{\pi^2}{18} + \frac{2 \log 2}{3} - \frac{\log 3}{2} + \frac{2 \log^2 2}{3} \right) n^3 + o (n^3).$$

Clearly, these two imply that 
\begin{equation}\label{expectation:inversions}
\mathbb{E}[\textnormal{Inv}(\gamma_n)] = \left(\frac{ 1-  \log 2 }{ 2} \right) n^2  + O(n), 
\end{equation}
 and 
\begin{equation}\label{variance:inversions}
\textnormal{Var}(\textnormal{Inv}(\gamma_n)) = \left(\frac{1}{3}  - \frac{\pi^2}{18} + \frac{2 \log 2}{3} - \frac{\log 3}{2} + \frac{2 \log^2 2}{3} \right) n^3 + o (n^3).
\end{equation}
The main result of this section is the following  which provides a central limit theorem for the number of inversions in an inverse-unfair permutation setting. The same result also holds for standard unfair permutations after a small modification, see Remark \ref{rmk:CLTinvfair}.
\begin{theorem}\label{thm:CLTinversions}
Let $\rho_n$ be an inverse-unfair permutation in $S_n$. Then we have $$d_W\left(\frac{\textnormal{Inv}(\rho_n) - (1 - \log 2 / 2) n^2}{\textnormal{Var}(\textnormal{Inv}(\gamma_n))},\mathcal{G}\right) \leq \frac{C}{\sqrt{n}},$$ where $C$ is a constant independent of $n$.  In particular,  $$\frac{\textnormal{Inv}(\rho_n) - (1 - \log 2 / 2) n^2}{\sqrt{\frac{1}{3}  - \frac{\pi^2}{18} + \frac{2 \log 2}{3} - \frac{\log 3}{2} + \frac{2 \log^2 2}{3} }n^{3/2} } \longrightarrow_d \mathcal{G},$$ as $n \rightarrow \infty$. 
\end{theorem}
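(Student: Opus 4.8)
The plan is to realize $\textnormal{Inv}(\rho_n)$ as a function of the independent scores $Z_1,\dots,Z_n$ and to run Stein's method through a size-biased coupling. Since $\rho_n(i)>\rho_n(j)$ holds exactly when $Z_i>Z_j$, we may write
$$W:=\textnormal{Inv}(\rho_n)=\sum_{1\le i<j\le n}X_{ij},\qquad X_{ij}=\mathbf 1(Z_i>Z_j),$$
where the $Z_i$ are \emph{independent} with $\mathbb P(Z_i\le x)=x^i$ on $(0,1)$, and by \eqref{usefulproperty} each indicator has mean $\mathbb E[X_{ij}]=\tfrac{i}{i+j}$. Writing $\mu:=\mathbb E[W]$ and $\sigma^2:=\textnormal{Var}(W)$, the identity \eqref{eq:invprop} lets us import the asymptotics \eqref{expectation:inversions} and \eqref{variance:inversions}, giving $\mu\sim c_1 n^2$ and $\sigma^2\sim c_2 n^3$ with explicit positive constants $c_1,c_2$; these orders are precisely what will produce the $n^{-1/2}$ rate after standardizing by $\sigma=\sqrt{\textnormal{Var}(\textnormal{Inv}(\rho_n))}\sim c_2^{1/2}n^{3/2}$.

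Next I would build the size-biased coupling $W^s$ tailored to a sum of indicators. Choose a random pair $(I,J)$ with $I<J$ and $\mathbb P((I,J)=(i,j))\propto\mathbb E[X_{ij}]=\tfrac{i}{i+j}$, independently of the $Z$'s; then, given $(I,J)=(i,j)$, resample only the pair $(Z_i,Z_j)$ from its conditional law given $\{Z_i>Z_j\}$, leaving every other coordinate $Z_k$ untouched. This is legitimate precisely because the $Z_k$ are independent, so conditioning on $X_{ij}=1$ alters the joint law of $(Z_i,Z_j)$ only. Letting $W^s$ be the number of inversions recomputed from the modified configuration, the standard recipe for size-biasing a sum of indicators (pick a coordinate with probability proportional to its mean, force that indicator to $1$ while giving the remaining variables their conditional law given $X_{ij}=1$) shows that $W^s$ has the $W$-size-biased distribution.

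I would then invoke the size-bias Wasserstein bound (Chen--Goldstein--Shao / Ross): with $W'=(W-\mu)/\sigma$,
$$d_W(W',\mathcal G)\le \frac{\mu}{\sigma^2}\sqrt{\tfrac{2}{\pi}}\,\sqrt{\textnormal{Var}\big(\mathbb E[W^s-W\mid W]\big)}+\frac{\mu}{\sigma^3}\,\mathbb E\big[(W^s-W)^2\big].$$
Since $W$ is $\mathbf Z$-measurable, conditioning on the coarser field $\sigma(W)$ only decreases the variance of the conditional expectation, so $\textnormal{Var}(\mathbb E[W^s-W\mid W])\le \textnormal{Var}(\mathbb E[W^s-W\mid \mathbf Z])$ and it suffices to control the latter. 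For the second term, resampling $(Z_I,Z_J)$ can change only indicators that involve index $I$ or index $J$, of which there are at most $2n$; hence $|W^s-W|\le 2n$ and $\mathbb E[(W^s-W)^2]\le 4n^2$, so with $\mu\sim c_1 n^2$, $\sigma^3\sim c_2^{3/2}n^{9/2}$ this term is $O(n^{-1/2})$. The first term then requires $\textnormal{Var}(\mathbb E[W^s-W\mid\mathbf Z])=O(n)$, after which $\mu/\sigma^2\sim n^{-1}$ delivers $O(n^{-1/2})$.

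The crux, and the step I expect to be the main obstacle, is establishing $\textnormal{Var}(\mathbb E[W^s-W\mid\mathbf Z])=O(n)$. I would compute $\mathbb E[W^s-W\mid\mathbf Z]=\sum_{i<j}p_{ij}\,g_{ij}(\mathbf Z)$, where $p_{ij}\propto \tfrac{i}{i+j}$ is the index-selection probability and $g_{ij}(\mathbf Z)$ is the expected net change, over the resampling of $(Z_i,Z_j)$, of the $O(n)$ indicators touching $i$ or $j$; each such change is governed by how many $Z_k$ fall between the old and new values of $Z_i$ (resp. $Z_j$). One knows a priori that $\mathbb E[W^s-W]=\sigma^2/\mu\sim n$, so the task is to show this conditional expectation concentrates around that value with fluctuations of order $\sqrt n$ rather than $n$. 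I would do this by expanding $g_{ij}(\mathbf Z)$ into pieces each depending on only a few coordinates, then exploiting the independence of the $Z_k$ to keep cross-covariances small while the summation weights $\tfrac{i}{i+j}$ supply the bookkeeping that holds the total variance at order $n$. Combining the two estimates gives $d_W(W',\mathcal G)\le C/\sqrt n$, and the stated convergence in distribution with the $n^{3/2}$ scaling follows at once from $\sigma\sim c_2^{1/2}n^{3/2}$.
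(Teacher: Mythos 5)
Your proposal follows essentially the same route as the paper: the identification $\textnormal{Inv}(\rho_n)=_d\sum_{i<j}\mathbf 1(Z_i>Z_j)$ with independent maxima, the size-biased coupling built by selecting a pair $(i,j)$ with probability proportional to $\tfrac{i}{i+j}$ and resampling $(Z_i,Z_j)$ conditional on $Z_i>Z_j$, the bound $|W^s-W|\le 2n$, the reduction $\textnormal{Var}(\mathbb E[W^s-W\mid W])\le\textnormal{Var}(\mathbb E[W^s-W\mid\mathbf Z])$, and the $O(n)$ variance estimate via the local-dependence/independence structure of the $Z_k$. The only difference is that you leave the crux variance bound as a plan, whereas the paper executes it by splitting into diagonal terms (bounded by a constant using $\mathbb E[(W^{ij}-W)^2]\le 4n^2$) and cross-covariance terms (bounded by $Cn$), but the underlying idea is identical.
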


\begin{remark}\label{rmk:CLTinvfair}
For symmetry reasons, we also have  $\frac{\textnormal{Inv}(\gamma_n) - (1 - \log 2 / 2) n^2}{\sqrt{\frac{1}{3}  - \frac{\pi^2}{18} + \frac{2 \log 2}{3} - \frac{\log 3}{2} + \frac{2 \log^2 2}{3} }n^{3/2} } \longrightarrow_d \mathcal{G},$ as $n \rightarrow \infty$, where $\gamma_n$ is an unfair permutation. \end{remark}

\begin{remark}
The discussion on number of inversions can be generalized to increasing (or decreasing) sequences of arbitrary length. This statistic in uniformly random permutation framework was previously studied in \cite{IO:2016}.  Their proof is a lot simpler   due to underlying symmetry. 
\end{remark}

\subsection{Proof of Theorem \ref{thm:CLTinversions}} 

The proof will require size biased couplings from the Stein's method literature. In general, this method refers to a general technique  to provide estimation errors
for distributional approximations.  For a survey of the techniques from Stein's
method, see \cite{Ross:2011}.

Letting  $W$ be
a nonnegative and integrable random variable, the distribution of
$W^s$ is said to be $W$-size biased if we have
$$\mathbb{E}[W f(W)]=\mathbb{E}[W] \mathbb{E}[f(W^s)],$$ for all
functions $f$ for  which the   expectations exist and for which $\mathbb{E} |W f(W) | < \infty$. The main result we will need is the following theorem.
\begin{theorem}\label{thm:Steinsizebias} \cite{Ross:2011} Let $W \geq 0$ be a random variable with $\mathbb{E}[W] =
\mu $ and $\textnormal{Var}(W) = \sigma^2< \infty.$ Let $W^s$ be defined on the
same space as $W$ and have the size bias distribution with respect
to $W$. Then $$d_{W} \left(\frac{W-\mu}{\sigma}, \mathcal{G} \right) \leq
\frac{\mu}{ \sigma^2} \sqrt{\frac{2}{\pi}}
\sqrt{\textnormal{Var}(\mathbb{E}(W^s-W|W))} + \frac{\mu}{\sigma^3}
\mathbb{E}[(W^s-W)^2].$$ 
\end{theorem}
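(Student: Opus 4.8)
The plan is to apply the size-bias bound of Theorem \ref{thm:Steinsizebias} to $W = \textnormal{Inv}(\rho_n)$, exploiting that $W$ is a simple functional of \emph{independent} random variables. Writing $Z_i = \max\{X^{(i)}_1,\dots,X^{(i)}_i\}$ as in Section \ref{sec:basics}, the ranks satisfy $\rho_n(i) > \rho_n(j) \iff Z_i > Z_j$, so $W = \sum_{1\le i<j\le n}\mathbf{1}(Z_i>Z_j)$, a sum of indicators of the independent variables $Z_1,\dots,Z_n$. By \eqref{usefulproperty}, $p_{ij} := \mathbb{P}(Z_i>Z_j) = i/(i+j)$ for $i<j$, and $\mu := \mathbb{E}[W] = \sum_{i<j}p_{ij}$. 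Since $\textnormal{Inv}(\rho_n) = \textnormal{Inv}(\gamma_n)$ by \eqref{eq:invprop}, the asymptotics \eqref{expectation:inversions}--\eqref{variance:inversions} give $\mu \sim c_1 n^2$ and $\sigma^2 := \textnormal{Var}(W) \sim c_2 n^3$ with explicit positive constants $c_1,c_2$.

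First I would construct the size-biased coupling by the standard recipe for sums of indicators: draw a random pair $(I,J)$ with $\mathbb{P}((I,J)=(i,j)) = p_{ij}/\mu$, resample $(Z_I',Z_J')$ from the conditional law of $(Z_I,Z_J)$ given $Z_I>Z_J$ (independently of the original values), leave all other coordinates fixed, and let $W^s$ be the resulting inversion count. Because only the indicators containing $I$ or $J$ change, the increment $\Delta := W^s - W$ satisfies $|\Delta|\le 2n$ deterministically; hence $\mathbb{E}[\Delta^2] \le 4n^2$, which already controls the second summand in Theorem \ref{thm:Steinsizebias}: with $\mu\sim c_1 n^2$ and $\sigma^3\sim c_2^{3/2}n^{9/2}$ one gets $\tfrac{\mu}{\sigma^3}\mathbb{E}[\Delta^2] = O\!\left(n^2\cdot n^2/n^{9/2}\right) = O(n^{-1/2})$.

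The heart of the argument is the first summand, where I must show $\textnormal{Var}(\mathbb{E}[\Delta\mid W]) = O(n)$; the naive bound by $\textnormal{Var}(\Delta)$ yields only $O(n^2)$, since a single resample can flip order $n$ indicators in the same direction. The device is to condition on the richer $\sigma$-field generated by $\mathbf{Z}=(Z_1,\dots,Z_n)$: as $\sigma(W)\subseteq\sigma(\mathbf{Z})$, conditional Jensen gives $\textnormal{Var}(\mathbb{E}[\Delta\mid W]) \le \textnormal{Var}(\mathbb{E}[\Delta\mid\mathbf{Z}])$, and $\mathbb{E}[\Delta\mid\mathbf{Z}]$ is computable by averaging over $(I,J)$ and over the fresh resample. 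This produces a decomposition $\mathbb{E}[\Delta\mid\mathbf{Z}] = T_1 + T_2 + T_3$, where $T_2 = \sum_k a_k(Z_k)$ is a sum of independent, uniformly bounded single-coordinate terms (so $\textnormal{Var}(T_2)=O(n)$), while $T_1$ and $T_3$ are weighted sums of the pairwise indicators $\mathbf{1}(Z_i>Z_j)$ with weights of order $1$ and $O(n)$, respectively. For $T_1,T_3$ I would bound the variance by summing covariances: two pairwise indicators are independent unless they share an index, so only $O(n^3)$ covariances are nonzero, and tracking the weights together with the $1/\mu^2\sim n^{-4}$ prefactor gives $\textnormal{Var}(T_1)=O(1/n)$ and $\textnormal{Var}(T_3)=O(n)$. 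Hence $\textnormal{Var}(\mathbb{E}[\Delta\mid W])=O(n)$, and the first summand is $\tfrac{\mu}{\sigma^2}\sqrt{2/\pi}\,\sqrt{O(n)} = O\!\left(n^2\cdot n^{1/2}/n^3\right) = O(n^{-1/2})$.

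Combining the two estimates in Theorem \ref{thm:Steinsizebias} yields $d_W((W-\mu)/\sigma,\mathcal{G}) \le C/\sqrt n$, and the stated convergence in distribution follows on substituting the explicit constants $c_1,c_2$. The main obstacle is exactly the $O(n)$ bound on $\textnormal{Var}(\mathbb{E}[\Delta\mid W])$: everything hinges on passing to the $\sigma(\mathbf{Z})$-conditioning, where the independence of the $Z_i$ can be exploited, and on the bookkeeping of the weighted pairwise covariances in $T_3$, where the exact cancellations and the factor $1/\mu^2$ must be handled with care.
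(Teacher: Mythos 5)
There is a genuine gap, and it is structural: your proposal does not prove the statement at all. The statement in question is the abstract size-bias normal approximation bound (Theorem \ref{thm:Steinsizebias}): for an \emph{arbitrary} nonnegative $W$ with finite mean $\mu$ and variance $\sigma^2$, and \emph{any} coupling $W^s$ having the $W$-size-biased distribution, the Wasserstein distance $d_W\left((W-\mu)/\sigma,\mathcal{G}\right)$ is bounded by the two stated terms. What you have written is an \emph{application} of that bound to $W=\textnormal{Inv}(\rho_n)$, i.e.\ a sketch of the paper's Theorem \ref{thm:CLTinversions}, taking Theorem \ref{thm:Steinsizebias} itself as a black box. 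Invoking the statement as a hypothesis in its own proof is circular; nothing in your argument addresses why the inequality holds for a general $W$ and a general size-bias coupling.

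A proof of the actual statement requires Stein's method machinery, none of which appears in your proposal. One must: (i) express $d_W$ via $1$-Lipschitz test functions $h$; (ii) solve the Stein equation $f_h'(x)-x f_h(x)=h(x)-\mathbb{E}[h(\mathcal{G})]$ and establish the solution bounds $\|f_h'\|_\infty\le\sqrt{2/\pi}\,\|h'\|_\infty$ and $\|f_h''\|_\infty\le 2\|h'\|_\infty$; (iii) use the defining size-bias identity $\mathbb{E}[Wf(W)]=\mu\,\mathbb{E}[f(W^s)]$ to rewrite $\mathbb{E}\bigl[f'(\widetilde{W})-\widetilde{W}f(\widetilde{W})\bigr]$, where $\widetilde{W}=(W-\mu)/\sigma$, in terms of $W^s-W$; and (iv) Taylor-expand to obtain one term controlled by $\sqrt{\textnormal{Var}(\mathbb{E}[W^s-W\mid W])}$ (via Cauchy--Schwarz, which is where the factor $\sqrt{2/\pi}$ enters) and another controlled by $\mathbb{E}[(W^s-W)^2]$. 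This is the argument in Ross \cite{Ross:2011}, which the paper cites rather than reproduces. As an aside, your sketch of the inversion CLT does track the paper's own proof of Theorem \ref{thm:CLTinversions} quite closely (same coupling for sums of indicators, same deterministic bound $|W^{ij}-W|\le 2n$, same passage from conditioning on $W$ to conditioning on $\mathbf{Z}$, same covariance counting giving order $n$), but that is a different statement from the one you were asked to prove.
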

Clearly,  in the following, for a given nonnegative random variable $W$, we will need to construct a size biased coupling of $W$ with certain properties. The following result for sums of Bernoulli random variables will suffice for our purposes.  
 
\begin{proposition}
Let $X_1,\ldots,X_n$ be zero-one random variables with
$\mathbb{P}(X_i=1)=p_i.$ For each $i=1,\ldots,n$, let $(X_j^{i})_{j
\neq i}$ have the distribution of $(X_j)_{j\neq i}$ conditional on
$X_i=1.$ If $W= \sum_{i=1}^n X_i,$ $\mu= \mathbb{E}[W]$, and $I$ is
chosen independent of all else with $\mathbb{P}(I=i)=p_i / \mu,$
then $W^s =\sum_{j \neq I} X_j^I+1$ has the size-bias distribution
of $W.$

\end{proposition}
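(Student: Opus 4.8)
The plan is to verify the defining identity of the size-biased
distribution directly from the construction. I must show that for every test
function $f$ with $\mathbb{E}|Wf(W)| < \infty$, the random variable
$W^s = \sum_{j \neq I} X_j^I + 1$ satisfies
$\mathbb{E}[Wf(W)] = \mu\,\mathbb{E}[f(W^s)]$, where $\mu = \sum_i p_i$.

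First I would expand the left-hand side using linearity of expectation,
writing $\mathbb{E}[Wf(W)] = \sum_{i=1}^n \mathbb{E}[X_i f(W)]$. For each
fixed $i$, since $X_i$ is zero-one, the contribution $\mathbb{E}[X_i f(W)]$
only sees the event $\{X_i = 1\}$, so I would rewrite it as
\begin{equation*}
\mathbb{E}[X_i f(W)] = \mathbb{E}\bigl[X_i\, f\bigl(\textstyle\sum_{j} X_j\bigr)\bigr]
= \mathbb{P}(X_i = 1)\,\mathbb{E}\Bigl[f\Bigl(1 + \textstyle\sum_{j \neq i} X_j\Bigr)\,\Big|\, X_i = 1\Bigr],
\end{equation*}
where on $\{X_i = 1\}$ I have replaced $W = X_i + \sum_{j \neq i} X_j$ by
$1 + \sum_{j \neq i} X_j$. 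The key step is now to recognize the conditional
law: by the definition of $(X_j^i)_{j \neq i}$ as having the distribution of
$(X_j)_{j \neq i}$ given $X_i = 1$, the conditional expectation equals
$\mathbb{E}[f(1 + \sum_{j \neq i} X_j^i)]$. Writing
$W^{(i)} := 1 + \sum_{j \neq i} X_j^i$, this gives
$\mathbb{E}[X_i f(W)] = p_i\, \mathbb{E}[f(W^{(i)})]$.

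Summing over $i$ and using $\mathbb{P}(I = i) = p_i/\mu$ together with the
independence of $I$ from everything else, I would conclude
\begin{equation*}
\mathbb{E}[Wf(W)] = \sum_{i=1}^n p_i\, \mathbb{E}[f(W^{(i)})]
= \mu \sum_{i=1}^n \frac{p_i}{\mu}\, \mathbb{E}[f(W^{(i)})]
= \mu\, \mathbb{E}[f(W^{(I)})] = \mu\, \mathbb{E}[f(W^s)],
\end{equation*}
since $W^s = W^{(I)}$ by construction. This establishes the size-bias
identity and hence the claim.

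The only genuinely delicate point is the conditioning step: one must be
careful that $(X_j^i)_{j \neq i}$ is defined to carry precisely the
conditional joint law of $(X_j)_{j \neq i}$ given $X_i = 1$, not merely the
correct marginals, so that the substitution into an arbitrary $f$ of the full
sum is valid. I do not expect any real obstacle here, as this is the standard
Bernoulli-sum size-biasing recipe; the proof is essentially a bookkeeping
exercise in conditional expectations, and the independence of $I$ is what
converts the $p_i$-weighted average into an expectation over the size-bias
distribution.
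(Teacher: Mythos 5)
Your proof is correct and is precisely the standard Bernoulli-sum size-biasing argument; the paper itself omits the proof and defers to Ross's survey, where this same computation (expand $\mathbb{E}[Wf(W)]=\sum_i\mathbb{E}[X_if(W)]$, condition on $X_i=1$, and average over the independent index $I$) is carried out. No discrepancies to report.
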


The proof is standard and we skip it referring to \cite{Ross:2011}.

\bigskip
 
\textbf{\emph{Proof of Theorem \ref{thm:CLTinversions}.}}  
First note that  $$\textnormal{Inv}(\rho_n) =_d \sum_{i < j} \mathbf{1}(Z_i > Z_j)$$ where $Z_i$ is the maximum of  $i$ independent $U(0,1)$ random variables, and where $Z_i$'s are independent. Let $$Y_{ij}=\mathbf{1}(Z_i > Z_j)$$ and $$W=\sum_{i < j} Y_{ij}.$$  
To size bias $W$, first denoting $p_{i,j} = \frac{\mathbb{E}[Y_{ij}]}{\sum_{k<l} \mathbb{E}[Y_{kl}]}$'s, we let $I$ be a  random variable taking values in $\{(i,j)\in \{1,\ldots,n\}: i < j\}$ with distribution $\mathbb{P}(I = (i,j)) = p_{i,j}$. Now  
if $Y_{i,j} =1$, then we keep the $Z_k$'s as they are. Otherwise, we sample  $(Z_i^*,Z_j^*)$ according to the  distribution of $(Z_i,Z_j)$ conditionally on $Z_i > Z_j$. Letting 
\begin{eqnarray*}
W^{ij}&=&\sum_{\{k,l\} \cap \{i,j\} = \emptyset } \mathbf{1}(Z_k >Z_l)+\sum_{s=1}^{i-1} \mathbf{1}(Z_s >Z_i^*)+\sum_{s=i+1, s \neq j}^{n} \mathbf{1}(Z_i^*>Z_s ) \\ 
&& + \sum_{s=1, s\neq i}^{j-1} \mathbf{1}(Z_s >Z_j^*)+\sum_{s=j+1}^{n} \mathbf{1}(Z_j^* >Z_s) +1,
\end{eqnarray*}
$W^I$ has $W$ size biased distribution.  
Also, for any $(i,j)$ with $1 \leq i < j \leq n$, \begin{eqnarray*}
                                               W^{ij}-W &=& \sum_{s=1}^{i-1} (\mathbf{1}(Z_s >Z_i^*)-\mathbf{1}(Z_s > Z_i)) + \sum_{s=i+1, s\neq j}^{n} (\mathbf{1}(Z_i^*> Z_s)-\mathbf{1}(Z_i > Z_s))\\
                                                 &+& \sum_{s=1, s \neq i}^{j-1} (\mathbf{1}(Z_s >Z_j^*)-\mathbf{1}(Z_s >Z_j)) + \sum_{s=j+1}^{n} (\mathbf{1}(Z_j^*>Z_s)-\mathbf{1}(Z_j >Z_s)) + 1 - \mathbf{1}(Z_i > Z_j).
                                             \end{eqnarray*}
This immediately gives $|W^{ij}-W| \leq 2n$ and so $\mathbb{E}|W^I-W|^2 \leq 4n^2.$ Next we focus on the  variance
estimate. Setting $\mathbf{Z} = (Z_1,\ldots,Z_n)$ observe  that 
$$\textnormal{Var}(\mathbb{E}[W^I-W|W]) \leq \textnormal{Var}(\mathbb{E}[W^I-W|\mathbf{Z}]).$$ See Lemma 4 of  \cite{Pike:2011} for a justification  of this last step. 
We then have \begin{eqnarray}\label{eq:vardecomp}
\nonumber                                    \textnormal{Var}(\mathbb{E}[W^I-W|W]) &\leq& \textnormal{Var}\left(  \sum_{i < j}  \frac{\frac{i}{i+j}}{\sum_{k < l} \frac{k}{k + l}} \mathbb{E}[W^{ij}-W| \mathbf{Z}]\right) \\
                                     &=&  \left(\sum_{i < j}  \frac{ \left( \frac{i}{i+j} \right)^2}{\left(\sum_{k < l} \frac{k}{k + l}\right)^2} \textnormal{Var}\left(\mathbb{E}[W^{ij}-W| \mathbf{Z}]\right) \right)\\
\nonumber                                      && +  \sum_{i < j, i'<j', (i,j) \neq (i',j')}  \frac{ \left( \frac{i}{i+j} \right) \left( \frac{i'}{i'+j'} \right)  }{\left(\sum_{k < l} \frac{k}{k + l}\right)^2}   \textnormal{Cov}(\mathbb{E}[W^{ij}-W| \mathbf{Z}], \mathbb{E}[W^{i'j'}-W| \mathbf{Z}]).
                                 \end{eqnarray}
Before moving further, we give some elementary estimates we will need below. First, it is clear that $$\sum_{k < l} \frac{k}{k + l} \leq C_1 n^2,$$ for some $C_1 > 0$. Also, $$\sum_{k < l} \frac{k}{k + l} \geq \sum_{k < l, k \geq n/2} \frac{k}{2l} \geq \sum_{k < l, k \geq n/2} \frac{ n/2}{2 n} = \binom{\lfloor n/2 \rfloor + 1}{2} \frac{1}{4} \geq C_2 n^2,$$   for some $C_2 > 0$. In particular, we conclude that  $\sum_{k < l} \frac{k}{k + l}$ is of order $n^2$. Lastly, note that $$\sum_{k<l} \left( \frac{k}{k + l} \right)^2 \leq \sum_{k<l}  \frac{k}{k + l} \leq C_3 n^2,$$ for some $C_3 > 0$.


Now for the first term in \eqref{eq:vardecomp}, we have 
\begin{eqnarray*}
  \left(\sum_{i < j}  \frac{ \left( \frac{i}{i+j} \right)^2}{\left(\sum_{k < l} \frac{k}{k + l}\right)^2} \textnormal{Var}\left(\mathbb{E}[W^{ij}-W| \mathbf{Z}]\right) \right) &=&   
   \left(\frac{\sum_{i < j}   \left( \frac{i}{i+j} \right)^2}{\left(\sum_{k < l} \frac{k}{k + l}\right)^2}   \left( \textnormal{Var}(W^{ij} - W) - \mathbb{E}[\textnormal{Var}( W^{ij} - W | \mathbf{Z})]\right) \right) \\
  &\leq&    \left(\frac{\sum_{i < j}   \left( \frac{i}{i+j} \right)^2}{\left(\sum_{k < l} \frac{k}{k + l}\right)^2}     \textnormal{Var}(W^{ij} - W)  \right) \\
  &\leq&  \left(\frac{\sum_{i < j}   \left( \frac{i}{i+j} \right)^2}{\left(\sum_{k < l} \frac{k}{k + l}\right)^2}   \mathbb{E}[(W^{ij} - W)^2] \right) \\ 
  &\leq&   4n^2 \frac{\sum_{i < j}   \left( \frac{i}{i+j} \right)^2}{\left(\sum_{k < l} \frac{k}{k + l}\right)^2}     \\
  &\leq& 4n^2 \frac{C_3 n^2}{C_2^2 n^4} \\
  &\leq& C_4.
  \end{eqnarray*}
Next, we focus on the second term in \eqref{eq:vardecomp} involving covariances. Observe that since the expectations are conditional on  $\mathbf{Z}$,  for any given  $i,j$,   $\mathbb{E}[W^{ij} - W | \mathbf{Z}]$ will be dependent on $C n$ many other $\mathbb{E}[W^{i'j'} - W | \mathbf{Z}]$. Denote by $\mathcal{S}_{ij}$ the set of $i',j'$ pairs with $i'< j'$ for which  $\mathbb{E}[W^{ij} - W | \mathbf{Z}]$ depends on $\mathbb{E}[W^{i'j'} - W | \mathbf{Z}]$.  Also observe that each of  the terms $\mathbb{E}[W^{ij} - W | \mathbf{Z}]$ is a sum of $2 n -1$ random variables that are themselves bounded by $1$. Combining these observations, we get                   
\begin{eqnarray*}
 \sum_{i < j, i'<j', (i,j) \neq (i',j')}  \frac{ \left( \frac{i}{i+j} \right) \left( \frac{i'}{i'+j'} \right)  }{\left(\sum_{k < l} \frac{k}{k + l}\right)^2}   \textnormal{Cov}(\mathbb{E}[W^{ij}-W| \mathbf{Z}], \mathbb{E}[W^{i'j'}-W| \mathbf{Z}]) 
\\ 
  = \sum_{i< j} \sum_{(i',j') \in \mathcal{S}_{ij}} \frac{ \left( \frac{i}{i+j} \right) \left( \frac{i'}{i'+j'} \right)  }{\left(\sum_{k < l} \frac{k}{k + l}\right)^2}   \textnormal{Cov}(\mathbb{E}[W^{ij}-W| \mathbf{Z}], \mathbb{E}[W^{i'j'}-W| \mathbf{Z}])  \\
\leq Cn^2  \frac{1}{\left(\sum_{k < l} \frac{k}{k + l}\right)^2 } \sum_{i< j}  \frac{i}{i+j} \sum_{(i',j') \in \mathcal{S}_{ij}} \frac{i'}{i'+ j'} \leq  C n^2 \frac{1}{n^4} n^2 n = Cn. 
\end{eqnarray*}

  Combining our estimates, we therefore obtain $$\textnormal{Var}(\mathbb{E}[W^I-W|W])  \leq C n,$$   for any $n\geq 1$ where $C$ is a constant independent of $n$.   Recalling also that $$\mathbb{E}\left[\textnormal{Inv}(\rho_n) \right] \sim (1 - \log 2 / 2) n^2,$$ and $$\textnormal{Var}(\textnormal{Inv}(\rho_n)) \sim  \left(\frac{1}{3}  - \frac{\pi^2}{18} + \frac{2 \log 2}{3} - \frac{\log 3}{2} + \frac{2 \log^2 2}{3} \right) n^3,$$ 
result now follows from Theorem \ref{thm:Steinsizebias}.

The second claim follows from a straightforward application of Slutsky's theorem.   \hfill $\square$

\section{Concluding remarks}\label{sec:conclusion}
We conclude the paper by noting that  unfair permutations admit two  natural generalizations  worth studying. 

(1) In standard inverse-unfair permutation framework, $i^{th}$ player chooses $i$ many i.i.d. uniform numbers over $(0,1)$ and picks the maximum. What if the $i^{th}$ player chooses $\phi(i)$ random numbers for some  function $\phi$?   Clearly, when $\phi$ is identically equal to $1$  and $\phi$ is the identity function, this setting recovers the  uniformly random and  the inverse-unfair permutation cases, respectively.  Depending on growth rate of $\phi_n$, one will  have quite different behaviours for underlying statistics.
 
(2) A second generalization can be given by making use of Markov chains. Let $
\{\phi(i)\}_{i \geq 1}$ be a Markov chain starting at time $t = 1$, with state space $\mathcal{S} \subset \mathbb{Z}^+$, and transition matrix $\mathbf{P}$. Also assume that $\phi(1) = 1$. Then we can define a variation of inverse-unfair permutations by saying that the $i^{th}$ player draws $\phi(i)$ numbers and chooses the maximum of these. 
Clearly, if the state space is $\mathcal{S} = \mathbb{Z}^+$, and the transition probability matrix $\mathbf{P}$ is $\mathbf{P}_{i,j} = \mathbf{1}(j = i +1)$, then we recover the standard unfair permutations.  However, the model is far more general thanks to the flexibility in choice of $\mathbf{P}$. In particular, depending on properties of $\mathbf{P}$ such as transience, recurrence, etc., statistics of the resulting model will differ from the corresponding unfair permutation statistics significantly. 

 \bigskip 
 
 \textbf{Acknowledgements} The author C. P. has been supported by TUBITAK within the project 113F059  entitled ''The conjecture of Mazur-Tate-Teitelbaum, CM elliptic curves and applications" 
as a postdoctoral researcher at  Ko\c{c} University.  The author \"{U}. I. is supported by  the Scientific and Research Council of Turkey [TUBITAK-117C047]. Parts of this paper were completed at the Nesin Mathematics Village, the authors would like to thank Nesin Mathematics Village for their kind hospitality. Also,  we would like to thank   an anonymous referee who detected some errors in original manuscript, and  whose comments  improved the paper significantly.

\end{document}